\documentclass[11pt,a4paper]{amsart}
\usepackage{amssymb}
\usepackage{iftex}

\iftutex
  
  \usepackage{unicode-math}
  \setmainfont{Latin Modern Roman}[%
     SmallCapsFont = Latin Modern Roman Caps,
     UprightFeatures = {SmallCapsFeatures = {}}, %
     BoldFeatures = {SmallCapsFont = {CMU Serif Bold}},
     ItalicFeatures = {SmallCapsFont = {CMU Serif Roman Slanted}},
     BoldItalicFeatures = {SmallCapsFont = {CMU Serif Bold Slanted}},
     SmallCapsFeatures={Letters=SmallCaps}]

   \setmathfont{Latin Modern Math}
                         
\else

  \usepackage[T1]{fontenc}

\fi

\usepackage{mathrsfs}

\usepackage[colorlinks=true,linkcolor=blue, citecolor=blue, urlcolor=blue,%
pagebackref=true]{hyperref}
\makeatletter

\@addtoreset{equation}{section}
\def\theequation{\thesection.\@arabic \c@equation}

\def\@citecolor{blue}
\def\@linkcolor{blue}
\def\@urlcolor{blue}
\def\theenumi{\@alph\c@enumi}

\makeatother
\theoremstyle{plain}
\newtheorem{theorem}[equation]{Theorem}
\newtheorem{lemma}[equation]{Lemma}
\newtheorem{corollary}[equation]{Corollary}
\newtheorem{proposition}[equation]{Proposition}

\theoremstyle{definition}

\newtheorem{remark}[equation]{Remark}
\newenvironment{remarkbox}[1][]{%
    \begin{remark}[#1] \pushQED{\qed}}{\popQED \end{remark}}

\newtheorem{example}[equation]{Example}

\newtheorem{definition}[equation]{Definition}

\newtheorem{notation}[equation]{Notation}

\newtheorem{discussion}[equation]{Discussion}

\newtheorem{observation}[equation]{Observation}
\newenvironment{observationbox}[1][]{%
    \begin{observation}[#1]\pushQED{\qed}}{\popQED \end{observation}}

\newtheorem{construction}[equation]{Construction}

    {\setcounter{step}{0}}{} \newcounter{step}

\newcommand{\bfa}{\mathbf a}

\newcommand{\bfb}{\mathbf b}

\newcommand{\calD}{\mathcal D}

\newcommand{\calJ}{\mathcal J}

 \let\strSh\calO

\newcommand{\bfx}{\mathbf x}

\def\to{\longrightarrow}
\DeclareMathOperator{\rank}{rk}

\DeclareMathOperator{\coker}{coker}

\DeclareMathOperator{\projective}{\mathbb{P}}
\DeclareMathOperator{\height}{ht}

\DeclareMathOperator{\charact}{char}

\DeclareMathOperator{\Spec}{Spec}
\DeclareMathOperator{\Proj}{Proj}

\DeclareMathOperator{\homology}{H}
\newcommand{\define}[1]{\emph{#1}}
\newcommand{\minus}{\ensuremath{\smallsetminus}}
\makeatletter
\def\RDerChar{\mathbf{R}}
\def\RDer{\@ifnextchar[{\R@Der}{\ensuremath{\RDerChar}}}
\def\R@Der[#1]{\ensuremath{\RDerChar^{#1}}}

\makeatother

\newif\ifreadkumminibib
\readkumminibibfalse

\hyphenation{iso-mor-phism}

\begin{document}
\title[Maximal minors of $1$-generic matrices]{Maximal minors of $1$-generic matrices have rational singularities}
\author[T.~Chau]{Trung Chau}
\address{Chennai Mathematical Institute, Siruseri, Tamilnadu 603103. India}
\email{chauchitrung1996@gmail.com}

\author[M.~Kummini]{Manoj Kummini}
\address{Chennai Mathematical Institute, Siruseri, Tamilnadu 603103. India}
\email{mkummini@cmi.ac.in}

\keywords{determinantal rings; 1-generic matrices; rational singularities}
\subjclass[2020]{13C40; 14M12; 13D02}

\thanks{Both authors were partially supported by an Infosys Foundation
fellowship.}
\begin{abstract}
We show that the quotient ring by the ideal of maximal minors of a
$1$-generic matrix has rational singularities.
This answers a conjecture of Eisenbud (1988) that such rings are normal,
and generalizes a result of
Conca, Mostafazadehfard, Singh and Varbaro (2018)
that generic Hankel determinantal rings have 
rational singularities in characteristic zero.
\end{abstract}

\maketitle

\section{Introduction}
Let $\Bbbk$ be a field and $M$ a matrix whose entries are $\Bbbk$-linear
combinations of a finite set of indeterminates over $\Bbbk$.
For example, $M$ could be a generic matrix, a symmetric matrix of
variables or a Hankel matrix of variables.
(Definitions will be given in Section~\ref{section:prelim}.)
Write $\Bbbk[M]$ for the standard-graded
polynomial ring over $\Bbbk$ generated by the entries of $M$.
By a \emph{determinantal ring}, we mean a ring of the form
$\Bbbk[M]/I_t(M)$, where $t$ is positive 
integer and $I_t(M)$ denotes the ideal generated by the $t \times t$ minors
of $M$.
Determinantal rings arise in many natural geometric questions
and form an important class of commutative rings.
We refer to \cite{BrVeDetRings}, \cite{WeymSyzygies03} and
\cite{BCRVDetRings} for in-depth treatment of these algebras.

Many determinantal rings have good singularities. For example, 
if $M$ is generic and $\charact \Bbbk > 0$, then $\Bbbk[M]/I_t(M)$ is
$F$-regular~\cite[(7.14)]{HochsterHunekeTCParamIdealsSplitting1994}.
If $M$ is generic and $\charact \Bbbk = 0$, then $\Bbbk[M]/I_t(M)$ can be
realized as the invariant ring of a reductive group, and, therefore, has
rational singularities~\cite{BoutotSingRatl1987}. (This follows also from
the geometric method of Kempf-Lascoux-Weyman of computing syzygies;
see~\cite[6.1.5]{WeymSyzygies03}.)
If $M$ is generic symmetric and $\charact \Bbbk = 0$, then 
$\Bbbk[M]/I_t(M)$ has rational singularities~\cite[\S 6.3]{WeymSyzygies03}.
Recently 
Conca, Mostafazadehfard, Singh and Varbaro~\cite{CMSVHankel}
showed that generic Hankel determinantal rings have $F$-rational
singularities (if $\charact \Bbbk$ is sufficiently large)
or rational singularities (in characteristic zero).

In this paper, we consider $1$-generic matrices~\cite{EisHankel}.
Let $M$ be an $m \times n$ matrix whose entries, as earlier, are
$\Bbbk$-linear combinations of indeterminates. We say that $M$ is
\define{$1$-generic} if
for every non-zero $(\lambda_1, \ldots, \lambda_m) \in \Bbbk^m$ and every
non-zero $(\mu_1, \ldots, \mu_n) \in \Bbbk^n$
\[
\begin{bmatrix}
\lambda_1 & \cdots & \lambda_m
\end{bmatrix}
M
\begin{bmatrix}
\mu_1 \\ \vdots \\ \mu_n
\end{bmatrix}
\neq 0
\]
Generic, generic symmetric and generic Hankel matrices
are examples of $1$-generic matrices. They correspond to certain linear
sections of a generic determinantal variety. In geometry, they arise while
studying the product map on global sections of two line bundles.
This property was used in~\cite{EisKohStillmanDeterEqnsCurves1988}
and~\cite{SS11} to show certain varieties are defined by the $2 \times 2$
minors of $1$-generic matrices.
At the same time, in general, the ideals defined by submaximal minors of a
$1$-generic matrix need not have good singularities: they need not be prime,
could have embedded components, etc.~\cite{GuerrSwansonMinors2003}.

In this paper, we prove the following theorem:
\begin{theorem}
\label{theorem:main}
Let $\Bbbk$ be a field.
Let $M$ be an $m \times n$ $1$-generic matrix, with $m \leq n$.
Then $\Bbbk[M]/I_m(M)$ has rational singularities.
\end{theorem}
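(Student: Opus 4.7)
The plan is to apply the Kempf--Lascoux--Weyman geometric method of computing syzygies. Let $R = \Bbbk[M]$ (a polynomial ring in $N = \dim_\Bbbk R_1$ variables) and $X = \Spec R/I_m(M) \subset \affine^N$. Consider the incidence variety
\[
Z \;=\; \{(x, [\lambda]) \in \affine^N \times \projective^{m-1} : \lambda^{\top} M(x) = 0\},
\]
with the two projections $\pi : Z \to \affine^N$ and $\pi' : Z \to \projective^{m-1}$. The critical use of $1$-genericity is that for every non-zero $\lambda \in \Bbbk^m$, the $n$ entries of $\lambda^{\top} M$ are linearly independent elements of $R_1$. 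Hence each fibre of $\pi'$ is a linear subspace of $\affine^N$ of codimension exactly $n$, so $Z$ is the total space of a rank-$(N-n)$ vector bundle on $\projective^{m-1}$. In particular $Z$ is smooth and irreducible of dimension $N - n + m - 1$, matching the dimension of $X$.

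The map $\pi$ is a proper surjection onto $X$; I would show it is birational as follows. Since $I_m(M)$ is generated by $m$-minors, $I_m(M) \cap R_{m-1} = 0$. On the other hand, every $m \times m$ submatrix of $M$ is itself $1$-generic and hence has non-vanishing determinant, so $\rank M = m$ at the generic point of $\affine^N$; therefore some $(m-1)$-minor is a non-zero element of $R_{m-1}$, giving an element of $R_{m-1}$ not in $I_m(M)$. Thus the open subset $\{x \in X : \rank M(x) = m - 1\} \subset X$ is non-empty, and over it each fibre of $\pi$ is the projectivisation of the one-dimensional left kernel of $M(x)$, hence a single point. Combined with the irreducibility of $X$ (from the primality of $I_m(M)$, due to Eisenbud), $\pi$ is birational, i.e., a projective resolution of singularities of $X$.

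The core of the proof is the computation of $\RDer\pi_*\calO_Z$. Since $Z \subset Y := \affine^N \times \projective^{m-1}$ has codimension $n$, the $n$ sections $(\lambda^{\top} M)_j$ of the line bundle $\calO_Y(1) := \mathrm{pr}_2^*\calO_{\projective^{m-1}}(1)$ form a regular sequence, giving a Koszul resolution
\[
0 \to \calO_Y(-n)^{\binom{n}{n}} \to \cdots \to \calO_Y(-1)^{n} \to \calO_Y \to \calO_Z \to 0.
\]
Pushing forward along $q : Y \to \affine^N$ and using that $H^j(\projective^{m-1}, \calO(-k))$ is non-zero only when $(j,k) = (0,0)$ or $j = m-1$ with $k \geq m$, the hypercohomology spectral sequence for $\RDer q_*\calO_Z$ has non-zero $E_1$-terms only at $(0,0)$ and along the row $q = m-1$ for $-n \leq p \leq -m$. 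Tracking the Koszul differential $d_1$ along the row $q = m-1$ and the cross-row differential $d_m$, the Kempf--Lascoux--Weyman framework identifies $\RDer q_*\calO_Z$ with the Eagon--Northcott complex of $M$ in the derived category. By Eisenbud's theorem, $I_m(M)$ is prime of codimension $n - m + 1$, so this complex is a resolution of $R/I_m(M)$; therefore $\RDer\pi_*\calO_Z \simeq R/I_m(M)$ concentrated in degree $0$. Combined with the smoothness of $Z$ and the birationality of $\pi$, this gives $\pi_*\calO_Z = \calO_X$ (hence $X$ is normal, since $Z$ is normal) and $R^i\pi_*\calO_Z = 0$ for $i > 0$, which is the definition of rational singularities for $X$.

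The main obstacle is the identification of the spectral-sequence output with the Eagon--Northcott complex. The ranks of the surviving $E_\infty$-terms match automatically via the cohomology of $\projective^{m-1}$, but identifying the $d_m$ differential with the natural Eagon--Northcott differential for an arbitrary $1$-generic $M$ requires care. All other steps in the argument are formal, relying only on the classical Eagon--Northcott theorem and Eisenbud's primality/codimension results for $1$-generic matrices.
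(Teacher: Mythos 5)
Your construction is the same as the paper's --- the incidence variety $Z$ over $\projective^{m-1}$, the Koszul resolution of $\strSh_Z$ made exact by $1$-genericity (the entries of $\lambda^{\top}M$ are linearly independent linear forms for every $\lambda\neq 0$), and the hypercohomology spectral sequence for the pushforward. The one genuine difference is that you work over $\affine^N$ rather than over $\Proj$ of the coordinate ring, and that is a real simplification: the paper, having proved rational singularities for $\Proj R$, must descend to $\Spec R$ via the Rees algebra and Hyry's theorem, which in turn requires checking that the $a$-invariant is negative; your affine setup makes that entire step unnecessary. Your birationality argument (a nonzero $(m-1)$-minor has degree $m-1$ while $I_m(M)$ is generated in degree $m$, so the rank-$(m-1)$ locus meets $X$) is also correct and a little slicker than the paper's.

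Two problems remain. First, the step you yourself flag as the ``main obstacle'' --- identifying the output of the spectral sequence with the Eagon--Northcott complex, differentials included --- is genuinely delicate for an arbitrary $1$-generic $M$, and as written it is a gap; but it is also entirely avoidable. The vanishing $\RDer^i\pi_*\strSh_Z=0$ for $i>0$ already follows from the shape of the $E_1$-page alone, since every nonzero term off the origin sits in total degree at most $(m-1)-m=-1$. For $\pi_*\strSh_Z=\strSh_X$ you only need that $\pi_*\strSh_Z$ is a quotient of $\strSh_{\affine^N}$ (the augmentation $q_*\strSh_Y\to q_*\strSh_Z$ is surjective by the same degree count), say $\strSh/\calJ$; since $Z$ is integral, $\calJ$ is prime, and since $\pi$ is proper with image $X$ and $I_m(M)$ is prime, $\calJ$ is forced to equal the ideal sheaf of $X$. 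No identification of differentials is needed. Second, the theorem is stated over an arbitrary field, and the definition of a rational resolution used here (Koll\'ar) also requires $\RDer^i\pi_*\omega_Z=0$ for $i>0$; this is automatic in characteristic zero by Grauert--Riemenschneider but not in characteristic $p$, and your argument never addresses it. The repair is the paper's: $X$ is Cohen--Macaulay (Eagon--Northcott), so by Lipman's criterion the conditions $\pi_*\strSh_Z=\strSh_X$ and $\RDer^{i}\pi_*\strSh_Z=0$ for $i>0$ already imply the condition on $\omega_Z$.
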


This answers partially in the affirmative a conjecture of
Eisenbud~\cite[top of p.~553]{EisHankel} that 
$\Bbbk[M]/I_m(M)$ is normal.
Further the above theorem generalizes the part of
\cite[Theorem~2.1]{CMSVHankel} that deals with
rational singularities. Our proof is an elementary
application of the Kempf-Lascoux-Weyman geometric
method~\cite[Chapter~5]{WeymSyzygies03}.

\subsection*{Acknowledgements}
The computer algebra system~\cite{M2} 
provided valuable assistance in studying examples.

\section{Preliminaries}
\label{section:prelim}

Throughout this paper $\Bbbk$ denotes a field. 
We say that an $m\times n$ matrix (with $m \leq n$) is 
\define{generic} if the $mn$ entries
are algebraically independent over $\Bbbk$. A square symmetric matrix 
in which the entries on and above the diagonal are algebraically
independent over $\Bbbk$ is called a \define{generic symmetric} matrix.
A \define{generic Hankel} matrix is one of the form
\[
M = 
\begin{bmatrix}
x_1 & x_2 & \cdots & x_n \\
x_2 & x_3 & \cdots & x_{n+1} \\
\vdots & \ddots \\
x_m & x_{m+1} & \cdots & x_{m+n-1} \\
\end{bmatrix}
\]
where the $x_i$ are algebraically independent over $\Bbbk$. It is immediate
from the definition of $1$-generic matrices that generic, generic symmetric
and generic Hankel matrices are $1$-generic.

\subsection*{Rational singularities}

By a 
\define{variety} over $\Bbbk$ we mean a quasi-projective irreducible and
reduced $\Bbbk$-scheme.

\begin{definition}
\label{definition:ratl}
(\cite[2.76]{KollarSingMMP13})
Let $X$ be a variety. A \define{rational resolution} of $X$ is
a proper birational morphism $f : Z \to X$ such that 
\begin{enumerate}

\item
$Z$ is non-singular;

\item
$f_* \strSh_{Z} = \strSh_X$;

\item
$\RDer^i f_* \strSh_{Z} = 0$ for all $i>0$;

\item
$\RDer^i f_* \omega_{Z} = 0$ for all $i>0$.
\end{enumerate}
We say that $X$ has \define{rational singularities} if $X$ has 
a rational resolution.
\end{definition}

If $X$ has a rational resolution, then every resolution of
singularities of $X$ is a rational resolution; see the remark
after~\cite[2.76]{KollarSingMMP13}.

\begin{remarkbox}
\label{remarkbox:OXenough}
Since $X$ is quasi-projective, it admits a dualizing complex
$\calD^\bullet$. Then, by~\cite[(4.2)]{LipCMgraded94}, 
conditions (b) and (c) of the above definition together is equivalent
to the statement that $\RDer f_* f^!\calD^\bullet \to \calD^\bullet$ is a
quasi-iso\-mor\-phism.
Now suppose that $X$ is Cohen-Macaulay, or, equivalently, that
$\calD^\bullet$ has exactly one non-zero cohomology, at the left end.
Hence, in this case, it is not necessary to check
the condition (d) of the above definition.
\end{remarkbox}

We will use the following proposition to see that if $X$ has rational
singularities, then so does a geometric line bundle over it.

\begin{proposition}
\label{proposition:smOverRatl}
Let $X$ be a Cohen-Macaulay variety with a rational resolution 
$f : Z \to X$.
Let $g: X' \to X$ be a smooth morphism of varieties.
Then the projection morphism $f' : X' \times_X Z \to X'$ is a rational
resolution.
\end{proposition}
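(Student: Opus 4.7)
The plan is to check each of the conditions of Definition~\ref{definition:ratl} for $f'$, together with properness and birationality. Properness is immediate from base change. For birationality, if $f$ is an isomorphism over a dense open $U\subseteq X$, then $f'$ is an isomorphism over $g^{-1}(U)$; since the smooth morphism $g$ has open image in the irreducible $X$, that image is dense, so $g^{-1}(U)$ is a non-empty (hence dense) open subset of the irreducible $X'$. For condition~(a), the projection $g':X'\times_X Z\to Z$ is the base change of the smooth morphism $g$ and is therefore itself smooth; smoothness of $g'$ over the regular $Z$ then forces $X'\times_X Z$ to be regular.

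For (b) and (c), I would invoke flat base change along $g$: since $g$ is smooth, hence flat, one has $g^{*}\RDer^{i}f_{*}\strSh_{Z}=\RDer^{i}f'_{*}\strSh_{X'\times_X Z}$ for every $i\geq 0$. Specializing to $i=0$ gives (b), and to $i>0$ gives (c). Rather than verifying (d) by computing $\RDer^{i}f'_{*}\omega_{X'\times_X Z}$ directly---which would require identifying the relative dualizing sheaf of $f'$ through the base change and invoking Grothendieck duality---I would invoke Remark~\ref{remarkbox:OXenough}: the smooth morphism $g$ is flat with regular (in particular Cohen--Macaulay) fibers, so the Cohen--Macaulay hypothesis on $X$ transfers to $X'$, and $X'$ (being quasi-projective) admits a dualizing complex. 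The remark then shows that (b) and (c) together already imply (d). The one moderate obstacle is (d), and using Remark~\ref{remarkbox:OXenough} to sidestep it is the organizational choice that keeps the proof short.
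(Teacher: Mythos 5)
Your proposal is correct and follows essentially the same route as the paper: properness and non-singularity by base change, (b) and (c) by flat base change along the smooth (hence flat) $g$, and (d) sidestepped via Remark~\ref{remarkbox:OXenough} after noting that $X'$ inherits the Cohen--Macaulay property from $X$. Your birationality argument via openness of smooth morphisms is just a slightly more explicit version of the paper's remark that $g$ is dominant.
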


\begin{proof}
Write $Z' = X' \times_X Z$.
Note that $f'$ is proper~\cite[II.4.8]{HartAG} and that,
since $g$ is dominant, $f'$ is birational.
Since $Z$ is non-singular and the projection morphism $Z' \to Z$ is
smooth, $Z'$ is non-singular~\cite[III.10.1]{HartAG}.
Conditions (b) and (c) of Definition~\ref{definition:ratl} (for the map
$f'$) follow from flat base-change~\cite[III.9.3]{HartAG}.
Since $g$ is smooth, $X'$ is Cohen-Macaulay.
Now use Remark~\ref{remarkbox:OXenough}.
\end{proof}

\section{Proof of the theorem}

Let $x_0, \ldots, x_r$ be indeterminates and $S = \Bbbk[x_0, \ldots, x_r]$.
Let $M$ be an $m \times n$ (with $m \leq n$) $1$-generic matrix 
of linear forms $\ell_{i,j}$ in $S$.
Write $R = S/I_m(M)$. 
For positive integers $t$, write $\projective^t = \projective^t_\Bbbk$.
Let $X = \Proj R \subseteq \projective^r = \Proj S$.

\begin{remarkbox}
\label{remarkbox:fromEisSyz}
We collect a few facts from~\cite[Theorem~6.4]{EisSyz05}. 
$I_m(M)$ is a prime ideal of height $(n-m+1)$.
$R$ is a Cohen-Macaulay and its minimal free resolution
over $S$ is given by the Eagon-Northcott complex. In particular, the
Castelnuovo-Mumford regularity of $R$ is $m-1$.
\end{remarkbox}

\begin{observationbox}
\label{observationbox:dimS}
With notation as above, $r+1 \geq m+n-1$;
equality is attained if $M$ is a Hankel matrix.
To see this, let $\lambda_i, 1 \leq i \leq m$ and $\mu_j, 1 \leq j \leq n$
be in $\Bbbk$ such that
\[
\begin{bmatrix}
\lambda_1 & \cdots & \lambda_m
\end{bmatrix}
M
\begin{bmatrix}
\mu_1 \\ \vdots \\ \mu_n
\end{bmatrix}
=0
\]
This gives $r+1$ bilinear equations in the $\lambda_i$ and the $\mu_j$.
The solution set inside $\projective^{m-1} \times \projective^{n-1}$ has
codimension at most $r+1$. Since $M$ is $1$-generic, the solution set 
inside $\projective^{m-1} \times \projective^{n-1}$
is empty. Hence $r+1 > m+n-2$.
Combining this with the information on $\height I_m(M)$
(Remark~\ref{remarkbox:fromEisSyz}) we see that
\begin{equation}
\label{equation:dimR}
\dim R = (r+1)-(n-m+1) \geq 2m-2.
\qedhere
\end{equation}
\end{observationbox}

Write $\projective = \projective^{r} \times \projective^{m-1}$.
Let $y_1, \ldots, y_m$ be homogeneous coordinates for $\projective^{m-1}$.
Let $Z \subseteq \projective$ be the closed subscheme defined by the $n$
bilinear forms $\sum_{i=1}^m y_i l_{i,j}, 1 \leq j \leq n$.
Let $\pi_1, \pi_2$ denote the projection maps $\projective \to
\projective^{r} $ and $\projective \to \projective^{m-1} $ respectively.

\begin{lemma}
\label{lemma:ZKoszul}
The Koszul complex
\[
0 \rightarrow \strSh_{\projective}(-n,-n) \rightarrow
\strSh_{\projective}(-n+1,-n+1)^n \rightarrow \cdots \rightarrow
\strSh_{\projective}(-1,-1)^n \rightarrow 
\strSh_{\projective} \rightarrow 0
\]
corresponding to the $n$ bilinear forms above is a locally free resolution
of $\strSh_Z$ on $\projective$.
\end{lemma}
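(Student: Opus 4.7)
The plan is to identify the Koszul complex as a resolution by checking, locally at each point of $\projective = \projective^r \times \projective^{m-1}$, that the $n$ bilinear sections $f_j := \sum_{i=1}^m y_i \ell_{i,j}$ form a regular sequence. Since $\projective$ is smooth, its local rings are regular and in particular Cohen--Macaulay, and a standard commutative-algebra fact then says that $n$ generators of an ideal of height $n$ in such a ring automatically form a regular sequence. Thus the entire problem reduces to checking that $\codim(Z, \projective) = n$, equivalently $\dim Z = r + m - 1 - n$.

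To compute $\dim Z$ we project to the second factor, $\pi_2|_Z \colon Z \to \projective^{m-1}$. The fiber over a $\Bbbk$-point $\lambda = [\lambda_1 : \cdots : \lambda_m]$ is cut out in $\projective^r$ by the $n$ linear forms $g_{\lambda, j} := \sum_{i=1}^m \lambda_i \ell_{i,j}$, $1 \le j \le n$. The decisive step is that $1$-genericity forces these $n$ forms to be $\Bbbk$-linearly independent in $S_1$: any relation $\sum_j \mu_j\, g_{\lambda,j} = 0$ rewrites as $[\lambda_1 \cdots \lambda_m]\, M\, [\mu_1 \cdots \mu_n]^{T} = 0$, which by the $1$-generic hypothesis on $M$, together with $(\lambda_i) \ne 0$, forces $(\mu_j) = 0$. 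In view of the bound $r \ge n - 1$ coming from Observation~\ref{observationbox:dimS}, each fiber is then a linear subspace of $\projective^r$ of dimension exactly $r - n$.

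Since all fibers of $\pi_2|_Z$ are irreducible of the same dimension $r - n$, the fiber-dimension theorem bounds $\dim Z \le (r - n) + (m - 1)$, while Krull's height theorem (applied to the $n$ generators of the ideal of $Z$) gives the reverse bound $\dim Z \ge (r + m - 1) - n$. Both together yield $\dim Z = r + m - 1 - n$, with every irreducible component of $Z$ of codimension exactly $n$ in $\projective$. The Cohen--Macaulay argument of the first paragraph then applies to the stalks $(f_1, \ldots, f_n)_p$ at each $p \in Z$; at points $p \notin Z$, at least one $f_j$ is a non-vanishing local section and the Koszul complex is locally split exact. In either case the complex is exact in positive degrees, so it is a locally free resolution of $\calO_Z$ on $\projective$.

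I expect the codimension computation for $Z$ to be the main obstacle, and within it the crucial step is translating the $1$-generic hypothesis into $\Bbbk$-linear independence of the $n$ fiberwise linear forms $g_{\lambda,j}$. After that is in hand, the fiber-dimension theorem, Krull's height theorem, and the standard Cohen--Macaulay/Koszul regularity statement complete the proof by routine means.
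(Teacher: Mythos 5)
Your proof is correct and follows essentially the same route as the paper: both hinge on translating $1$-genericity into the statement that the $n$ fiberwise linear forms $\sum_i b_i\ell_{i,j}$ are linearly independent, so that each fiber of $Z \to \projective^{m-1}$ is a linear $\projective^{r-n}$, and then deducing that the bilinear forms cut out codimension $n$, hence form a regular sequence in the regular (so Cohen--Macaulay) local rings of $\projective$. The only difference is packaging: the paper upgrades the fiber computation to a local projective-bundle structure on $Z$ over $\projective^{m-1}$ (which it reuses later to see that $Z$ is smooth and irreducible), whereas you obtain the codimension by combining the fiber-dimension theorem with Krull's height theorem.
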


\begin{proof}
It suffices to show that at all closed points $p \in Z$,
the $n$ bilinear forms define an ideal of height $n$ in
$\strSh_{\projective,p}$.
Since extending the base field is faithfully flat, we may assume that
$\Bbbk$ is algebraically closed. 

We will prove a slightly stronger statement: 
Let $\bfb = [b_1 : \cdots :b_m] \in \projective^{m-1}$.
Then there exists 
a neighbourhood $U$ of $\bfb$ such that $\pi_2^{-1}U$ has the structure of
a projective bundle over $U$ with each fibre isomorphic to 
$\projective^{r-n}$, sitting as
a linear subvariety inside the ambient fibre $\projective^{r}$.

Write $M = \sum_{k=0}^{r} M_k x_k$, where the $M_k$ are $(m \times n)$
matrices with entries in $\Bbbk$. 
Let $A_\bfb$ be the $n \times (r+1)$
matrix whose $k$th column is 
\[
\left(\begin{bmatrix} b_1 & \cdots & b_m \end{bmatrix}M_k\right)^{%
\mathrm{tr}}.
\]
Write $\projective^r(\bfb)$ for the fibre over $\bfb$.
Then $Z \cap \projective^r(\bfb)$ is defined inside $\projective^r(\bfb)$
by the equation
$\displaystyle A_\bfb \begin{bmatrix} x_0 \\ \vdots \\ x_r
\end{bmatrix} = 0$.
Since $M$ is $1$-generic, $\rank A_\bfb = n$.
Let $0 \leq k_1 < \cdots < k_n \leq r$ be such that the columns
\[
\left(\begin{bmatrix} b_1 & \cdots & b_m \end{bmatrix}M_{k_l}\right)^{%
\mathrm{tr}}, 1 \leq l \leq n
\]
of $A_\bfb$ are linearly independent, i.e., 
the determinant of the $n \times n$ submatrix
of $A_\bfb$ given by these columns (and all the rows) is non-zero.
This determinant is a polynomial function of $(b_1, \ldots, b_m)$. 
Hence there is a neighbourhood $U$ of $\bfb$ such that for all $\bfb' :=
[b'_1:\cdots :b'_m] \in U$, the columns of $A_{\bfb'}$ given by 
\[
\left(\begin{bmatrix} b'_1 & \cdots & b'_m \end{bmatrix}M_{k_l}\right)^{%
\mathrm{tr}}, 1 \leq l \leq n
\]
are linearly independent. 
Therefore for all $\bfb'  \in U$, the solutions to 
$\displaystyle A_{\bfb'} \begin{bmatrix} x_0 \\ \vdots \\ x_r
\end{bmatrix} = 0$
are given by taking
arbitrary values for $a_k, k \in \{0, \ldots, r \} \minus \{k_1, \ldots,
k_n\}$ (which, in turn, determine the values of $a_k, k \in \{k_1, \ldots,
k_n\}$ uniquely). This proves the assertion made above about 
$\pi_1^{-1}U$ and completes the proof of the lemma.
\end{proof}

Suppose that $M$ is any $m \times n$ matrix, not necessarily $1$-generic, of
linear forms in $S$. Consider the Koszul complex as in the above lemma. The
proof shows that 
it is a resolution
if and only if
the matrix $A_\bfb$ has rank $n$ for every $\bfb$, which holds 
if and only if 
$M$ is $1$-generic.

\begin{proposition}
\label{proposition:OZdirImage}
${\pi_1}_*\strSh_{Z}$ has a locally free resolution
\[
0 \rightarrow \strSh_{\projective^{m-1}}(-n)^{b_{n-m+1}} \rightarrow
\strSh_{\projective^{m-1}}(-n+1)^{b_{n-m} } \rightarrow \cdots \rightarrow
\strSh_{\projective^{m-1}}(-m)^{b_1} \rightarrow 
\strSh_{\projective^{m-1}} \rightarrow  0
\]
on $\projective^{m-1}$ with 
\[
b_i = \binom{n}{i+m-1} \rank_\Bbbk 
\homology^{m-1}(\projective^{m-1}, \strSh_{\projective^{m-1}}(1-i-m)).
\]
Further, $\RDer^j{\pi_1}_*\strSh_{Z} = 0$ for all $j>0$.
\end{proposition}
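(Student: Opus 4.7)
The approach is to apply $\RDer {\pi_1}_*$ to the Koszul resolution $F^{\bullet}$ of $\strSh_Z$ from Lemma~\ref{lemma:ZKoszul} and read off the conclusion from the hypercohomology spectral sequence
\[
E_1^{p,q} = \RDer^q {\pi_1}_{*} F^{p} \Longrightarrow \RDer^{p+q} {\pi_1}_{*} \strSh_Z.
\]
Since $\strSh_{\projective}(-k,-k) \cong \pi_1^{*}\strSh_{\projective^{r}}(-k) \otimes \pi_2^{*}\strSh_{\projective^{m-1}}(-k)$, the projection formula combined with flat base change along the smooth map $\projective^{m-1} \to \Spec \Bbbk$ yields, for each Koszul term,
\[
\RDer^q {\pi_1}_{*} \strSh_{\projective}(-k,-k) \cong \strSh_{\projective^{r}}(-k) \otimes_{\Bbbk} \homology^q(\projective^{m-1}, \strSh_{\projective^{m-1}}(-k)).
\]

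Plugging in the standard cohomology of projective space, this is nonzero only when either $(q,k)=(0,0)$ or $q=m-1$ and $k\geq m$. Hence the $E_1$-page has only two nonzero rows: row $q=0$ consists of the single term $\strSh_{\projective^{r}}$ at $(0,0)$, while row $q=m-1$ occupies the columns $p=-n,\ldots,-m$, with $E_1^{-k,m-1}$ a free sheaf of exactly the rank $b_{k-m+1}$ written in the statement. In particular no entry $E_1^{p,q}$ has $p+q>0$, so $\RDer^{j} {\pi_1}_{*} \strSh_Z = 0$ for every $j>0$, giving the last assertion of the proposition.

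To extract the resolution, use that $\strSh_Z$ is a sheaf, so the abutment vanishes on every diagonal $p+q\neq 0$. Bi-degree considerations show that the only higher differential that can link the two rows is $d_m : E_m^{-m,m-1} \to E_m^{0,0}$; every other $d_r$ with $r\geq 2$ has either zero source or zero target. Consequently $E_2^{p,m-1}=E_\infty^{p,m-1}$ for $p\neq -m$, which, together with the vanishing of the abutment on negative diagonals, forces the $d_1$-complex along row $q=m-1$ to be exact at every column except the rightmost. At that rightmost column, $E_2^{-m,m-1}$ is the cokernel of the preceding $d_1$, and $d_m$ injects it into $\strSh_{\projective^{r}}$ with cokernel $E_\infty^{0,0} = {\pi_1}_{*} \strSh_Z$. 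Splicing these exactness statements gives the stated resolution.

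The main point to get right is the spectral-sequence bookkeeping: the two-row structure must force $d_2,\ldots,d_{m-1}$ and $d_{m+1},\ldots$ to vanish by bidegree and single out $d_m$ as the sole arrow connecting the two rows. Everything else — the per-term direct-image computation and the identification of the ranks $b_i$ with $\binom{n}{i+m-1}\,h^{m-1}(\projective^{m-1},\strSh(1-i-m))$ — is a direct readoff from the $E_1$-page.
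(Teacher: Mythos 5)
Your proof is correct and follows essentially the same route as the paper: push the Koszul resolution of Lemma~\ref{lemma:ZKoszul} through the hypercohomology spectral sequence for $\RDer{\pi_1}_*$, compute the two-row $E_1$-page from the cohomology of $\projective^{m-1}$, and splice the row $q=m-1$ into $E_1^{0,0}$ via the lone surviving differential $d_m$. (Incidentally, your computation correctly lands the resolution in sheaves $\strSh_{\projective^{r}}(-k)$ on $\projective^{r}$, the target of $\pi_1$; the subscript $\projective^{m-1}$ in the statement and in the paper's proof is evidently a typo.)
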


\begin{proof}
Write $K_\bullet$ for the Koszul complex of Lemma~\ref{lemma:ZKoszul}
placed on the non-positive horizontal axis, i.e., 
$K_{i} := \strSh_{\projective}(-i,-i)^{\binom ni}$ 
at position $(-i,0)$, $0 \leq i \leq n$. 
We look at the hypercohomology spectral sequence (in the second quadrant)
for computing
$\RDer {\pi_1}_*K_\bullet$.
Then we have a spectral sequence
\[
E_1^{-i,j} = \RDer^j {\pi_1}_* K_{i} \Rightarrow 
\homology^{j-i}(\RDer{\pi_1}_* K_\bullet)
\simeq
\RDer^{j-i}{\pi_1}_* \strSh_Z,
\]
where the last isomorphism follows from the quasi-isomorphism $K_\bullet
\to \strSh_Z$.
Note that
\[
\RDer^j {\pi_1}_* K_{i} = 
\begin{cases}
\strSh_{\projective^{m-1}}, & \text{if}\; (i,j ) = (0,0 );
\\
\strSh_{\projective^{m-1}}(-i)^{b_{i-m+1}}, & \text{if}\; m \leq i \leq n
\;\text{and}\; j=m-1;
\\
0, & \text{otherwise}.
\end{cases}
\]
Therefore $E_1^{-i,j}  = 0$ unless $j-i \leq 0$, from which it follows
that 
$\RDer^{j}{\pi_1}_* \strSh_Z = 0$ for all $j > 0$.

Again from the quasi-isomorphism 
$\RDer{\pi_1}_* K_\bullet \to \RDer{\pi_1}_* \strSh_Z$,
it follows that 
$\homology^{j-i}(\RDer{\pi_1}_* K_\bullet) = 0$ if $j<i$.
Hence
$E_\infty^{-i,j} = 0$ for all $j<i$.
For $i>m$, 
$E_\infty^{-i,m-1} = E_2^{-i,m-1}$, so, 
$E_1$-page maps
\[
0 \rightarrow E_1^{-n,m-1} 
\rightarrow E_1^{-n+1,m-1} \rightarrow \cdots
\rightarrow E_1^{-m,m-1}
\]
form an exact sequence, except at the right end.
On the other hand, 
\[
E_m^{-m,m-1} = E_2^{-m,m-1}
= \coker \left(
E_1^{-m-1,m-1}
\rightarrow E_1^{-m,m-1}
\right )
\]
Since 
$E_\infty^{-m,m-1} = E_{m+1}^{-m,m-1}$ and
$E_\infty^{0,0} = E_{m+1}^{0,0}$,
we further see that 
the $E_{m}$-page map
$E_{m}^{-m,m-1} \rightarrow E_{m}^{0,0} = E_{1}^{0,0}$
is injective and its cokernel is 
${\pi_1}_* \strSh_Z$.
Putting these together, we conclude that
\[
0 \rightarrow E_1^{-n,m-1} 
\rightarrow E_1^{-n+1,m-1} \rightarrow \cdots
\rightarrow E_1^{-m,m-1}
\rightarrow E_1^{0,0}
\to 0
\]
is a locally free resolution of 
${\pi_1}_* \strSh_Z$.
\end{proof}

\begin{corollary}
\label{corollary:schThyImage}
$\strSh_X  = {\pi_1}_*\strSh_{Z}$.
\end{corollary}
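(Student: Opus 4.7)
The plan is to verify that $\pi_1 : Z \to \projective^r$ factors scheme-theoretically through $X \hookrightarrow \projective^r$, obtain a surjection $\strSh_X \twoheadrightarrow {\pi_1}_*\strSh_Z$, and then conclude it is an isomorphism via a Hilbert polynomial match.

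First, for the set-theoretic picture, a point $p \in \projective^r$ lies in $\pi_1(Z)$ iff the system $\sum_i y_i\,\ell_{i,j}(p) = 0$, $1 \leq j \leq n$, has a nontrivial solution in $\projective^{m-1}$, i.e.\ iff $\rank M(p) < m$, i.e.\ iff $p \in X$. So $\pi_1(Z) = X$ as topological subspaces of $\projective^r$. To upgrade to a scheme-theoretic factorization, I would observe that $Z$ is smooth --- the proof of Lemma~\ref{lemma:ZKoszul} locally realizes $\pi_2 : Z \to \projective^{m-1}$ as a projective bundle with fiber $\projective^{r-n}$ --- and hence reduced, while $X$ is reduced by Remark~\ref{remarkbox:fromEisSyz}. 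A morphism from a reduced source whose underlying map of spaces has image in a reduced closed subscheme factors through that subscheme scheme-theoretically. The right end of the resolution in Proposition~\ref{proposition:OZdirImage} then exhibits ${\pi_1}_*\strSh_Z$ as a quotient of $\strSh_{\projective^r}$ whose kernel contains $\widetilde{I_m(M)}$, giving the desired surjection $\strSh_X \twoheadrightarrow {\pi_1}_*\strSh_Z$.

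Finally I would check this surjection is an isomorphism by matching Hilbert polynomials. By Remark~\ref{remarkbox:fromEisSyz}, the sheafified Eagon--Northcott complex resolves $\strSh_X$ with $i$-th Betti number $\binom{n}{i+m-1}\binom{i+m-2}{m-1}$. Using Serre duality on $\projective^{m-1}$, with $\omega_{\projective^{m-1}} = \strSh(-m)$, the rank $b_i$ in Proposition~\ref{proposition:OZdirImage} evaluates to the same number. Hence $\strSh_X$ and ${\pi_1}_*\strSh_Z$ share a Hilbert polynomial, and any surjection of coherent sheaves on $\projective^r$ with equal Hilbert polynomials is an isomorphism (the kernel, being a coherent sheaf with zero Hilbert polynomial, must itself vanish). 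The main obstacle is the scheme-theoretic factorization in the second paragraph; once it is in place, the Hilbert-polynomial check is a routine computation.
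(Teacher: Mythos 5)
Your proposal is correct, and the opening moves coincide with the paper's: both use the fact that the resolution of Proposition~\ref{proposition:OZdirImage} ends in $\strSh_{\projective^r}\to{\pi_1}_*\strSh_Z\to 0$, so that ${\pi_1}_*\strSh_Z=\strSh_{\projective^r}/\calJ$ for an ideal sheaf $\calJ$, together with properness of $\pi_1$ and the rank computation to see $\pi_1(Z)=X$ set-theoretically. Where you diverge is in identifying $\calJ$ with $\calI_X$. The paper observes that $Z$ is integral, so ${\pi_1}_*\strSh_Z$ is a sheaf of domains and $\calJ$ is a sheaf of prime ideals; since $\Supp({\pi_1}_*\strSh_Z)=X$ forces $\sqrt{\calJ}=\calI_X$ (with $\calI_X$ prime because $R$ is a domain), primality of $\calJ$ gives $\calJ=\calI_X$ at once. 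You instead get the containment $\calI_X\subseteq\calJ$ from the scheme-theoretic factorization of $\pi_1$ through the reduced subscheme $X$, and then force equality numerically: the Betti numbers $b_i=\binom{n}{i+m-1}\binom{i+m-2}{m-1}$ of Proposition~\ref{proposition:OZdirImage} agree (twists included) with those of the Eagon--Northcott resolution of $R$ from Remark~\ref{remarkbox:fromEisSyz}, so the Hilbert polynomials of $\strSh_X$ and ${\pi_1}_*\strSh_Z$ coincide and the kernel of your surjection vanishes. Both arguments are complete; yours trades the one-line primality observation for a factorization lemma plus a cohomology computation, but it has the side benefit of making explicit that the complex of Proposition~\ref{proposition:OZdirImage} is numerically the Eagon--Northcott complex, which is reassuring and not recorded in the paper.
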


\begin{proof}
Proposition~\ref{proposition:OZdirImage} implies that there exists a sheaf
of ideals $\calJ \subseteq \strSh_{\projective^r }$ 
such that $\strSh_{\projective^r }/\calJ = {\pi_1}_*\strSh_{Z}$.
Since $Z$ is an integral scheme, $\calJ$ is a sheaf of prime ideals.
On the other hand, $\pi_1$ is proper, so  $\pi_1(Z) = X$. 
Therefore the support of ${\pi_1}_*\strSh_{Z}$ is $X$, which implies that
$\calJ$ is the ideal sheaf of the irreducible and reduced subscheme $X$
(note that $R$ is a domain).
Hence $(X, {\pi_1}_*\strSh_{Z}) = (X, \strSh_X)$.
\end{proof}

Eisenbud~\cite[top of p.~553]{EisHankel} conjectured that $1$-generic
subvarieties (to be precise, something a little more general than
$1$-generic) are normal. The next proposition answers this in the
affirmative.
\begin{proposition}
\label{proposition:normal}
$R$ is a normal domain.
\end{proposition}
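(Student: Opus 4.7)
My plan is to verify normality via Serre's criterion. Since $R$ is Cohen-Macaulay (Remark~\ref{remarkbox:fromEisSyz}), the $(S_2)$ condition holds automatically, so it suffices to check $(R_1)$. If $m = 1$, then $1$-genericity of $M$ forces the entries $\ell_{1,j}$ to be $\Bbbk$-linearly independent and $R$ is a polynomial ring, so assume henceforth $m \geq 2$; then $\dim R \geq 2m - 2 \geq 2$ by Observation~\ref{observationbox:dimS}. Setting $X = \Proj R$, I would reduce $(R_1)$ for $R$ to normality of $X$ as follows: for each homogeneous $f \in R_+$, $R_f \cong (R_f)_0[f, f^{-1}]$ is a Laurent polynomial ring over the coordinate ring of $D_+(f) \subseteq X$, so height-one primes of $R$ not containing $R_+$ either lie over height-one primes of the base, where regularity transfers, or are maximal ideals of the fibre $K\bigl((R_f)_0\bigr)[f,f^{-1}]$, where regularity is automatic. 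Since the vertex $V(R_+)$ has codimension $\geq 2$ and $X$ is Cohen-Macaulay, $R$ satisfies $(R_1)$ if and only if $X$ is normal.

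It then remains to show $X$ is normal, and for this I use $\pi_1 : Z \to X$. The proof of Lemma~\ref{lemma:ZKoszul} actually establishes that $Z$ is locally a $\projective^{r-n}$-bundle over $\projective^{m-1}$, so $Z$ is smooth (hence normal) and irreducible. Corollary~\ref{corollary:schThyImage} supplies $(\pi_1)_*\strSh_Z = \strSh_X$, so for any non-empty affine open $U \subseteq X$ we have $\strSh_X(U) = \strSh_Z(\pi_1^{-1}(U))$. Because $\pi_1$ is surjective (also from Corollary~\ref{corollary:schThyImage}) and $Z$ is irreducible, $\pi_1^{-1}(U)$ is a non-empty open subset of $Z$, and normality of $Z$ implies that its ring of sections is an integrally closed subring of $K(Z)$. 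In particular, $\strSh_X(U)$ is integrally closed inside the smaller field $K(X) \subseteq K(Z)$, so $\strSh_X(U)$ is a normal domain. Hence $X$ is normal.

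The substantive work — producing the Koszul resolution of $\strSh_Z$ in Lemma~\ref{lemma:ZKoszul} and identifying $(\pi_1)_*\strSh_Z$ with $\strSh_X$ in Corollary~\ref{corollary:schThyImage} — has already been carried out in this section; given these inputs the normality deduction is essentially formal, which is why the argument appears as a short corollary of the preceding machinery rather than the main obstacle.
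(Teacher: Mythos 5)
Your proposal is correct and follows essentially the same route as the paper: normality of $X$ is deduced from $\strSh_X = {\pi_1}_*\strSh_Z$ together with normality of $Z$, and then Serre's criterion combined with Cohen--Macaulayness of $R$ (Remark~\ref{remarkbox:fromEisSyz}) upgrades this to normality of $R$. You simply spell out in more detail the two steps the paper leaves implicit (the cone/Laurent-ring reduction from $R$ to $X$, and the integral closedness of rings of sections on $Z$).
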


\begin{proof}
Use Corollary~\ref{corollary:schThyImage} and the fact that $Z$ is normal
to see that $X$ is normal.
Therefore $R$ is regular in codimension $1$. Since it is
Cohen-Macaulay (Remark~\ref{remarkbox:fromEisSyz}), it is a normal domain.
\end{proof}

\begin{proposition}
\label{proposition:desing}
The map $Z \to X$ is a desingularization.
\end{proposition}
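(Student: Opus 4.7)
The plan is to exploit both projections from $Z$: $\pi_2$ reveals the smooth structure of $Z$, while $\pi_1$ itself will be the desingularization. The proof of Lemma~\ref{lemma:ZKoszul} already shows that $\pi_2 : Z \to \projective^{m-1}$ is Zariski-locally a projective bundle with fibre $\projective^{r-n}$, so $Z$ is smooth and irreducible of dimension $(m-1)+(r-n) = r+m-n-1$. By Remark~\ref{remarkbox:fromEisSyz}, $\dim X = \dim R - 1 = r+m-n-1$ as well, so $\dim Z = \dim X$.

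The morphism $\pi_1 : Z \to X$ is the restriction to $Z$ of the proper projection $\projective^{r} \times \projective^{m-1} \to \projective^{r}$, and is therefore proper. By Corollary~\ref{corollary:schThyImage}, $\pi_{1*}\strSh_Z = \strSh_X$; in particular the scheme-theoretic image of $\pi_1$ is $X$, so $\pi_1$ is surjective. Being surjective between irreducible varieties of equal dimension, $\pi_1$ is generically finite. Since $X$ is normal (Proposition~\ref{proposition:normal}) and $\pi_{1*}\strSh_Z = \strSh_X$, the Stein factorization of $\pi_1$ identifies the finite intermediate scheme as $\Spec_X \strSh_X = X$, so $\pi_1$ itself has geometrically connected fibres. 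A proper, generically finite morphism whose generic fibre is also connected must have generic fibre equal to a single reduced point, so $\pi_1$ is birational.

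I expect the birationality step to be the only delicate point. A more hands-on route would try to show that $\rank M(\bfa) = m-1$ at a general point $\bfa \in X$, equivalently that $I_{m-1}(M) \not\subseteq I_m(M)$, so that the fibre of $\pi_1$ over such $\bfa$ is a single reduced point. For a general $1$-generic matrix, however, this non-containment is less transparent than in the generic case, and~\cite{GuerrSwansonMinors2003} warns that submaximal minors of a $1$-generic matrix may behave badly. The Stein-factorization argument bypasses this issue by using only the already-established Corollary~\ref{corollary:schThyImage} and the normality of $X$.
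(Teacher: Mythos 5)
Your reduction to birationality is mostly sound, and the route to generic finiteness is genuinely different from the paper's. You use the local projective-bundle structure from the proof of Lemma~\ref{lemma:ZKoszul} to get that $Z$ is smooth, irreducible, of dimension $(m-1)+(r-n)=\dim X$, and deduce generic finiteness from surjectivity plus equality of dimensions. The paper instead argues concretely: the submatrix $M'$ of the first $m-1$ rows of $M$ is again $1$-generic, so $\height I_{m-1}(M) \geq \height I_{m-1}(M') = n-m+2 > n-m+1 = \height I_m(M)$; hence at a general point $\bfa$ of $X$ the matrix has rank exactly $m-1$ and the fibre is the unique point of $\projective^{m-1}$ given by the left kernel. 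The ``hands-on route'' you set aside as opaque is thus actually painless, precisely because $1$-genericity passes to the row-submatrix; the warning of~\cite{GuerrSwansonMinors2003} concerns the ideals $I_t(M)$ for $t<m$, not the height bound needed here.

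The final step of your argument, however, has a genuine gap. The generic fibre of $\pi_1$ is $\Spec K(Z)$ with $K(Z)$ a finite field extension of $K(X)$: it is irreducible and reduced because $Z$ is integral, and finite because $\pi_1$ is proper and generically finite. So it is \emph{automatically} a single reduced connected point, and your implication ``proper, generically finite, connected generic fibre $\Rightarrow$ single reduced point $\Rightarrow$ birational'' fails: $\Spec L \to \Spec K(X)$ is a single reduced connected point for any finite extension $L \supsetneq K(X)$. What you need is that the generic fibre is \emph{geometrically} a single reduced point, i.e., $K(Z)=K(X)$. Even the geometric form of Stein factorization (which requires more than the bare connectedness statement) would only exclude separable subextensions, leaving purely inseparable ones in positive characteristic --- and the theorem is stated over an arbitrary field. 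The repair uses exactly the ingredients you already have and is the content of the lemma the paper proves immediately after this proposition: since $\pi_1$ is proper and generically finite, there is an affine open $U'\subseteq X$ over which $\pi_1$ is finite; then $\pi_1^{-1}(U')$ is affine with coordinate ring $\Gamma(\pi_1^{-1}(U'),\strSh_Z)=\Gamma(U',{\pi_1}_*\strSh_Z)=\Gamma(U',\strSh_X)$, so $\pi_1$ is an isomorphism over $U'$ and hence birational. (The normality of $X$ is not needed anywhere in this argument.)
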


\begin{proof}
Note that $Z$ is non-singular.
We need to show that the map is birational.
We show that it is generically finite and use the next lemma.
Let $M'$ be the $(m-1) \times n$ submatrix of $M$ consisting of the first
$m-1$ rows. It is $1$-generic, so $\height I_{m-1}(M') = n-m+2$.
Therefore $\height I_{m-1}(M) \geq n-m+2$.
Since $\height I_m(M) = n-m+1$, there exists a dense open subset $U$ of $X$
such that for all $\bfa := [a_0 : \cdots : a_r] \in U$ the specialization
$M|_{\bfx = \bfa}$ has rank $m-1$. 
For all such $\bfa$, there exists a unique $\bfb \in \projective^{m-1}$
such that $(\bfa,\bfb) \in Z$.
\end{proof}

\begin{lemma}
Let $f : Y \to Y'$ be a dominant finite-type generically finite map of
irreducible and reduced noetherian schemes such that $f_* \strSh_Y =
\strSh_{Y'}$.
Then $f$ is birational.
\end{lemma}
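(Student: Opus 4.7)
The plan is to identify the stalk $(f_*\strSh_Y)_{\eta'}$ with the function field $K(Y)$, where $\eta'$ is the generic point of $Y'$. The hypothesis $f_*\strSh_Y = \strSh_{Y'}$ will then force $K(Y') = (\strSh_{Y'})_{\eta'} = (f_*\strSh_Y)_{\eta'} = K(Y)$, which is exactly birationality.

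The first step is the observation that the generic fiber of $f$ is a single point. On any affine chart $\Spec B \subseteq Y$ mapping into an affine $\Spec A' \subseteq Y'$, the ring $B \otimes_{A'} K(Y')$ is simultaneously a localization of the domain $B$ (hence a domain) and a finite-dimensional $K(Y')$-algebra by generic finiteness (hence zero-dimensional), so it must be a field, necessarily equal to $K(Y)$. Thus the generic point $\eta$ of $Y$ is the unique point lying over $\eta'$. This is the step where both integrality and generic finiteness are indispensable, and it is the main subtlety of the argument.

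By dominance $f(\eta) = \eta'$, so for every open $V \ni \eta'$ of $Y'$ the preimage $f^{-1}(V)$ contains $\eta$, and the restriction $\strSh_Y(f^{-1}(V)) \hookrightarrow \strSh_{Y,\eta} = K(Y)$ is injective; passing to the colimit yields an injection $(f_*\strSh_Y)_{\eta'} \hookrightarrow K(Y)$. For surjectivity, I would take $\alpha \in K(Y)$, represent it as a section on some open $U \ni \eta$, and set $C = Y \setminus U$, a proper closed subset with $\eta \notin C$. The claim is that $\overline{f(C)} \neq Y'$: otherwise, irreducibility of $Y'$ together with the finite number of irreducible components of $C$ would force some component $C_0$ to dominate $Y'$, making its generic point map to $\eta'$ and hence coincide with $\eta$ by the uniqueness above; this would place $\eta \in C$, a contradiction. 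Then $V := Y' \setminus \overline{f(C)}$ is an open neighborhood of $\eta'$ with $f^{-1}(V) \subseteq U$, and $\alpha|_{f^{-1}(V)}$ represents the required element of $(f_*\strSh_Y)_{\eta'}$, completing the identification of the stalk with $K(Y)$.
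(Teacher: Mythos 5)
Your proof is correct, but it takes a genuinely different route from the paper's. The paper invokes the structure result that a dominant, generically finite, finite-type morphism of integral noetherian schemes is finite over a dense open subset of the target (cited from Hartshorne's exercises): choosing such an affine open $U' = \Spec A'$ with $U = f^{-1}(U') = \Spec A$ finite over it, the hypothesis gives $A' = \Gamma(U', f_*\strSh_Y) = \Gamma(U, \strSh_Y) = A$, so $f$ is an isomorphism over a dense open. You instead compute the stalk $(f_*\strSh_Y)_{\eta'}$ at the generic point directly and identify it with $K(Y)$. Your two key steps are sound: the generic fibre is the single reduced point $\eta$, because on each chart $B \otimes_{A'} K(Y')$ is a nonzero localization of the domain $B$ (the chart dominates $\Spec A'$ since $Y$ is irreducible and $f$ is dominant) and is zero-dimensional of finite type over $K(Y')$, hence is a field between $B$ and $\mathrm{Frac}(B)$, hence equals $K(Y)$; and a closed set avoiding $\eta$ cannot dominate $Y'$, which gives surjectivity of $(f_*\strSh_Y)_{\eta'} \to K(Y)$. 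What your argument buys is self-containedness --- it replaces the ``finite over a dense open'' theorem with elementary point-set and localization arguments. What it costs is one last (standard) observation you have elided: the conclusion $K(Y') \cong K(Y)$ yields birationality in the sense the paper needs (an isomorphism over dense opens) only after noting that a dominant finite-type morphism of integral noetherian schemes inducing an isomorphism on function fields is an isomorphism over a nonempty open of $Y'$ (clear the denominators of the finitely many algebra generators on an affine chart). You should record that step, or adopt the function-field definition of birational explicitly.
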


\begin{proof}
Using~\cite[Exercise~II.3.8]{HartAG}, let $U' \subseteq Y'$ be an affine
open subscheme such that, with $U = f^{-1}(U')$, $U \to U'$ is finite.
Hence $U$ is affine, and the map $U \to U'$ is given by a finite map of
rings $A' \to A$.
Now, by the hypothesis, $A' = \Gamma(U',\strSh_{Y'}) = \Gamma(U,\strSh_Y) =
A$.
Therefore $f$ is birational.
\end{proof}

\begin{proof}[Proof of Theorem~\protect{\ref{theorem:main}}]
We have already seen that $R$ is normal
(Proposition~\ref{proposition:normal}), so we now argue that it has
rational singularities. We first show that $X$ has rational singularities.
We see that the morphism $\pi_1 : Z \to X$ satisfies the first three
conditions of Definition~\ref{definition:ratl}, using
Proposition~\ref{proposition:OZdirImage},
Corollary~\ref{corollary:schThyImage}
and Proposition~\ref{proposition:desing}.
Since $X$ is Cohen-Macaulay (and admits a dualizing module), we 
use~\cite[(4.2)]{LipCMgraded94} to see that the final condition also holds.
Hence $X$ has rational singularities.

The $a$-invariant of $R$ is negative, since  
$\dim R  \geq 2m-2$ and 
the Castelnuovo-Mumford regularity of $R$ is $m-1$
(see~\eqref{equation:dimR} and Remark~\ref{remarkbox:fromEisSyz}).
Now the theorem now essentially follows
from~\cite[Theorem~1.5]{HyryBlowupRingsRationalSings1999}, but we sketch
some details since that theorem and
the preceding definitions there assume characteristic zero.

Let $A$ be the Rees algebra $R \oplus R_+t \oplus (R_+)^2t^2 \oplus
\cdots$ and $Y = \Proj A$.
Note that $Y$ is a line bundle over $X$~\cite[II, (8.7.8)]{EGA}, so $Y$ has
a rational resolution $Z' \to Y$ (Proposition~\ref{proposition:smOverRatl}). 
Use the Leray spectral sequence for the composite map $Z' \to Y \to \Spec R$ 
(as the proof of~\cite[Theorem~1.5]{HyryBlowupRingsRationalSings1999} does)
to conclude that the conditions
(b) and (c) of Definition~\ref{definition:ratl} are satisfied for the
composite map $Z' \to \Spec R$.
Using fact that $R$ is Cohen-Macaulay (and Remark~\ref{remarkbox:OXenough})
we conclude that $R$ has rational singularities.
\end{proof}

\ifreadkumminibib
\bibliographystyle{alphabbr}
\bibliography{kummini}
\else

\fi %

\end{document}